\theoremstyle{plain}
\newtheorem{mainthm}{Theorem}
\newtheorem*{conj*}{Conjecture}
\newtheorem*{cor*}{Corollary}
\newtheorem*{def*}{Definition}
\newtheorem{theorem}{Theorem}[section]
\newtheorem{proposition}{Proposition}
\newtheorem{lemma}[theorem]{Lemma}
\newtheorem{definition}{Definition}
\newcommand{\Z}{\mathbb{Z}}
\newcommand{\N}{\mathbb{N}}
\newcommand{\eps}{\varepsilon}
\title{Positively $N$-expansive homeomorphisms and the L-shadowing property.}
\author[Bernardo Carvalho and Welington Cordeiro]{}
\thanks{2010 \emph{Mathematics Subject Classification}: Primary 37B99; Secondary 37D99.}
 \keywords{Expansive, $n$-expansive, shadowing, limit shadowing}
\begin{document}

\maketitle
\centerline{\scshape Bernardo Carvalho, Welington Cordeiro}
{\footnotesize
} 
\begin{abstract}{We discuss further the dynamics of $n$-expansive homeomorphisms with the shadowing property, started in \cite{CC}. The L-shadowing property is defined and the dynamics of $n$-expansive homeomorphisms with such property is explored. In particular, we prove that positively $n$-expansive homeomorphisms with the L-shadowing property can only be defined in finite metric spaces.}
\end{abstract}



\bigskip


\section{Introduction and statement of results}

Expansiveness and shadowing properties have been discussed since the beginnings of dynamical systems theory. They are so important to the hyperbolic theory that homeomorphisms admitting them are usually called \emph{topologically hyperbolic}, since their dynamics and the hyperbolic dynamics are pretty much the same. Many authors considered generalizations of both shadowing and expansiveness (see \cite{DH}, \cite{FG}, \cite{KT}, \cite{LZ}, \cite{Mor} among others). One of them, which is receiving much attention recently, is \emph{$n$-expansiveness}. These systems were introduced in \cite{Mor}, where for each integer $n>1$, non-trivial examples of \emph{positively $n$-expansive homeomorphisms} were given. 

\begin{definition}\label{23}
We say that a continuous map $f$, defined in a metric space $(X,d)$, is positively $n$-expansive if there exists $c>0$ such that for each $x\in X$ the set $$W^s_{c}(x):=\{y\in X; \,\, d(f^k(y),f^k(x))\leq c \,\,\,\, \textrm{for every} \,\,\,\, k\geq 0\}$$ contains at most $n$ different points of $X$. The number $c$ is called the positively $n$-expansive constant of $f$ and the set $W^s_c(x)$ is the local stable set of $x$ of size $c$.
\end{definition}

Positively $1$-expansive homeomorphisms are exactly the positively expansive homeomorphisms discussed in \cite{KR}. It is proved there that if a compact metric space $X$ admits a positively expansive homeomorphism, then $X$ is finite. Despite this statement does not contain any shadowing property in it, we note that there is a hidden dynamical property which proves the space is finite: the \emph{L-shadowing property}.

 \begin{definition}
We say that a homeomorphism, defined in a compact metric space $X$, has the L-shadowing property, if for every $\eps>0$, there exists $\delta>0$, such that for every $\delta$-pseudo orbit $(x_k)_{k\in\Z}$, that is also a two-sided limit pseudo-orbit, there is $z\in X$ that $\eps$-shadows and also two-sided limit shadows $(x_k)_{k\in\Z}$.
\end{definition}

The reader which is not familiar with the notions of $\delta$-pseudo orbit, two-sided limit pseudo orbit, $\eps$-shadow and two-sided limit shadow is invited to read precise definitions in Section \ref{positiv}. The L-shadowing property is not so different from other shadowing properties found in the literature (see \cite {C}, \cite{GOP}, \cite{O} and \cite{P1}, for example). The difference, however, is the following: while the shadowing property assures the existence of one point that $\eps$-shadows $(x_k)_{k\in\Z}$ and the two-sided limit shadowing property assures the existence of another one that two-sided limit shadows $(x_k)_{k\in\Z}$, the L-shadowing property obtains one point that both $\eps$-shadow and two-sided limit shadow it. This difference spreads to the dynamical consequences of this property: it implies both shadowing and a finite number of chain recurrent classes (see Proposition \ref{3}), while examples of homeomorphisms with the shadowing property and an infinite number of chain recurrent classes can be found in \cite{CC} and the two-sided limit shadowing property implies even topological mixing. 

We prove that this property is present in topologically hyperbolic homeomorphisms (see Proposition \ref{expansive}) and obtain that the shadowing property and the L-shadowing property are equivalent for expansive homeomorphisms. However, for each $n>1$, examples of $n$-expansive homeomorphisms with the shadowing property but without the L-shadowing property can be found in \cite{CC}. Indeed, the proof of Proposition \ref{expansive} does not follow immediately to the $n$-expansive scenario (see the brief discussion after its proof in Section \ref{localtsls}). In this paper, we prove that the L-shadowing property is equivalent to the space being finite, for positively $n$-expansive homeomorphisms.

\begin{mainthm}\label{h} If a positively $n$-expansive homeomorphism is defined in a compact metric space $X$ and has the L-shadowing property, then $X$ is finite. 
\end{mainthm}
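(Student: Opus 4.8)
The plan is to argue by contradiction: suppose $X$ is infinite, and let $c>0$ be a positively $n$-expansive constant for $f$. Besides the local stable sets $W^s_c(x)$, write $W^u_c(x)=\{y\in X:\ d(f^k(y),f^k(x))\le c\ \text{for all}\ k\le 0\}$ and $W^u(x)=\{y\in X:\ d(f^k(y),f^k(x))\to 0\ \text{as}\ k\to -\infty\}$; backward asymptoticity is an equivalence relation on $X$ whose classes are the sets $W^u(x)$. By Proposition \ref{3} the map $f$ has the shadowing property. I would extract the dynamical content of L-shadowing through the following construction. Fix $\eps>0$ with $2\eps<c$ and let $\delta\in(0,\eps]$ be a constant associated to $\eps$ by the L-shadowing property. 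For $x_1,x_2\in X$ with $d(x_1,x_2)<\delta$, the sequence $(w_k)_{k\in\Z}$ given by $w_k=f^k(x_1)$ for $k\le -1$ and $w_k=f^k(x_2)$ for $k\ge 0$ is a $\delta$-pseudo orbit whose only nonzero jump (at $k=-1$) has size $d(x_1,x_2)$; since it is eventually an honest orbit in each direction it is also a two-sided limit pseudo orbit. L-shadowing then furnishes $z=z(x_1,x_2)$ that $\eps$-shadows and two-sided limit shadows $(w_k)$, and from the shadowing estimates one reads off $z\in W^u_c(x_1)\cap W^u(x_1)$ and $z\in W^s_c(x_2)$.

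The next step is a counting argument. Fixing $x_2$ and letting $x_1$ range over the ball $B(x_2,\delta)$, the assignment $x_1\mapsto z(x_1,x_2)$ takes values in $W^s_c(x_2)$, hence at most $n$ values; moreover each $x_1$ lies in $W^u\big(z(x_1,x_2)\big)$, since $z(x_1,x_2)$ is backward asymptotic to $x_1$. Thus $B(x_2,\delta)$ meets at most $n$ of the classes $W^u(\cdot)$. Covering the compact space $X$ by finitely many such balls and using that these classes partition $X$, I would conclude that $X$ is the union of finitely many backward asymptoticity classes. (If the construction is needed for $x_1,x_2$ that are not $\delta$-close --- for instance after restricting to a chain transitive piece, which one should be able to arrange since an infinite $X$ ought to force, via $n$-expansiveness, an infinite chain recurrent class --- one inserts a finite $\delta$-chain from $x_1$ to $x_2$ into the middle of $(w_k)$; such a chain exists by chain transitivity, and by compactness of $X$ its length can be bounded independently of $x_1,x_2$, so only a bounded time-shift is introduced.)

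Finally I would show that each backward asymptoticity class is finite, which together with the previous paragraph contradicts the infiniteness of $X$. Since $f$ permutes the finitely many classes, after replacing $f$ by a suitable power $g=f^r$ one may assume $g$ leaves invariant an infinite class $V$, on which all points are mutually backward asymptotic and $g$ is still positively $n$-expansive; it then suffices to reach a contradiction. The decisive point --- and, I expect, the main obstacle --- is a collapse statement: a point $y\in W^s_c(x)$ that is moreover backward asymptotic to $x$ should be forced to equal $x$, so that $g$ is positively expansive on $V$ and the result of \cite{KR} makes $V$ finite. To prove this collapse I would pass to a Reddy-type compatible metric in which $g$ expands distances up to scale $c$, fix a minimal set $N$ inside the (common) $\alpha$-limit set of the points of $V$ together with a $g^{-1}$-recurrent point $a\in N$, observe that every point of $V$ is backward asymptotic to $a$, and hence that any two points of $V$ come arbitrarily close along suitable negative times; the local expansion of $g$ then pushes them apart to scale $c$ within a bounded number of steps --- that is, still at arbitrarily negative times --- contradicting their backward asymptoticity unless they coincide. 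Making this work when $n>1$, where the local stable sets carry up to $n-1$ extra points and the backward asymptoticity classes need not be open, is exactly where ordinary shadowing fails to suffice and the two-sided limit shadowing is indispensable; it is also why the proof of Proposition \ref{expansive} does not transfer directly, and is the part I anticipate to be most technical.
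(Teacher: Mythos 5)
Your first two paragraphs reproduce the paper's key step faithfully: the L-shadowing point $z$ of the pseudo-orbit obtained by concatenating the past orbit of $x_1$ with the future orbit of $x_2$ (for $d(x_1,x_2)<\delta$) lies in $W^u(x_1)\cap W^s_{c/2}(x_2)$, so each $\delta$-ball meets at most $n$ distinct unstable sets, and compactness gives a finite set $A$ with $X=\bigcup_{x\in A}W^u(x)$. Up to that point the argument is correct and is essentially the paper's.

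The gap is in your concluding step. You still need each backward asymptoticity class to be finite, and you propose to get this from a ``collapse'' lemma ($y\in W^s_c(x)\cap W^u(x)\Rightarrow y=x$) proved by passing to a Reddy-type metric in which $g$ expands small distances. But the existence of such an adapted expanding metric is a theorem about positively expansive maps, i.e.\ the case $n=1$; for $n>1$ no such metric is available, and proving the collapse statement would essentially amount to proving positive expansiveness, which is the heart of what must be shown --- you yourself flag this as the unresolved technical core, so as written it is not a proof. A secondary problem: even granting the collapse, you apply \cite{KR} to $g=f^r$ restricted to an infinite invariant class $V=W^u(x)$, but such a class need not be closed, hence not compact, so \cite{KR} does not apply directly. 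The paper closes the argument much more cheaply and you should too: each unstable set contains at most one periodic point (for periodic $p,q$ with $q\in W^u(p)$, the distance $d(f^{-k}(q),f^{-k}(p))$ takes finitely many values, so tending to $0$ forces $q=p$), hence $X=\bigcup_{x\in A}W^u(x)$ with $A$ finite gives $Per(f)$ finite; by Proposition \ref{3} $f$ has the shadowing property, so Proposition \ref{2} gives $CR(f)=\overline{Per(f)}$, a finite set, and in particular $\Omega(f)$ is finite; finally Lemma \ref{Lema}, an elementary $\omega$-limit argument using only positive $n$-expansiveness, upgrades finiteness of $\Omega(f)$ to finiteness of $X$. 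Substituting this chain of results for your third paragraph (and dropping the parenthetical detour through chain transitive pieces, which is then unnecessary) completes the proof.
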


This theorem generalizes the main theorem of \cite{KR} for positively expansive homeomorphisms. Indeed, it is proved in \cite{Sakai} that positively expansive maps have the shadowing property if, and only if, they are open maps, proving, in particular, that any positively expansive homeomorphism admits the L-shadowing property, in view of Proposition \ref{expansive}. We further prove that the metric space is finite assuming transitivity and the shadowing property.

\begin{mainthm}\label{bla}
If a positively $n$-expansive homeomorphism is defined in a compact metric space $X$, is transitive and admits the shadowing property, then $X$ is finite.
\end{mainthm}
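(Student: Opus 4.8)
The plan is to reduce Theorem B to Theorem A by showing that, for a positively $n$-expansive homeomorphism $f$ of a compact metric space $X$, transitivity together with the shadowing property forces the L-shadowing property; then Theorem A immediately gives that $X$ is finite. So the heart of the argument is the implication ``transitive $+$ shadowing $\Rightarrow$ L-shadowing'' in the positively $n$-expansive setting.

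First I would recall the standard fact that a transitive homeomorphism with the shadowing property is chain transitive, hence the whole space $X$ is a single chain recurrent class; and in fact, by a well-known lemma, transitivity plus shadowing yields that $f$ is topologically mixing on a decomposition into finitely many cyclically-permuted clopen pieces, or, if $X$ is connected, topologically mixing outright. I would first dispose of the general case by passing to a power $f^m$ restricted to one piece of this decomposition: it suffices to prove $X$ is finite for $f^m$ mixing, since $f^m$ on a clopen piece is again positively $n$-expansive (the stable sets only shrink under taking powers, and shrink at most by a bounded factor when restricting to a clopen set) and has the shadowing property. So I may assume $f$ is topologically mixing.

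Next, the key dynamical input: I would show that for a positively $n$-expansive $f$, the local stable sets are ``uniformly small'' in a strong sense — namely, there is $c>0$ and, for the positively expansive-type estimate, a uniform contraction: for every $\eps>0$ there is $N$ such that $d(f^k(x),f^k(y))\le \eps$ for all $k\ge 0$ whenever $d(f^k(x),f^k(y))\le c$ for $0\le k\le N$, at least away from the finitely many ``extra'' points in each stable set. This is the $n$-expansive analogue of the uniform-expansion lemma for positively expansive maps. Granting such uniform behaviour on the stable side, and using the shadowing property to control the two-sided limit pseudo-orbits, I would construct, for a given two-sided limit pseudo-orbit $(x_k)_{k\in\Z}$ that is also a $\delta$-pseudo orbit, a shadowing point $z$ by the usual compactness/diagonal argument, and then upgrade $\eps$-shadowing to two-sided limit shadowing: in the positive direction the uniform stable contraction forces $d(f^k(z),x_k)\to 0$ as $k\to+\infty$ (because the pseudo-orbit's asymptotic jumps go to $0$), and in the negative direction one uses shadowing of the reversed orbit together with the fact that mixing plus shadowing gives the two-sided limit shadowing property on the backward side as in Proposition \ref{expansive}'s proof. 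This is precisely the place where the argument must diverge from the expansive case, since full expansiveness is not available; instead one leans on positive $n$-expansiveness on the forward side and on the mixing/shadowing combination on the backward side.

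The main obstacle I expect is establishing the uniform stable contraction for positively $n$-expansive homeomorphisms, i.e. that the ``$n-1$ extra points'' in each $W^s_c(x)$ do not destroy the uniform estimate one needs. The difficulty is that, unlike the positively expansive case where $W^s_c(x)=\{x\}$, here $W^s_c(x)$ may genuinely contain up to $n$ points, and these could vary wildly with $x$; one must show by a covering/compactness argument (splitting $X$ into a finite cover by sets on which the stable sets behave coherently, as is done for $n$-expansive homeomorphisms in \cite{Mor} and \cite{CC}) that there is still a uniform $N=N(\eps)$ after which forward orbits that stay $c$-close for time $N$ must stay $\eps$-close forever. Once this uniform-contraction lemma is in hand, verifying the L-shadowing property is a routine adaptation of the proof of Proposition \ref{expansive}, and Theorem A closes the argument.
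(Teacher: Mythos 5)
There is a genuine gap, and it sits exactly where you place your ``main obstacle''. Your whole reduction rests on the claim that, for positively $n$-expansive homeomorphisms, transitivity plus shadowing forces the L-shadowing property, and you propose to get this from a ``uniform stable contraction'' lemma valid ``away from the finitely many extra points in each stable set''. That lemma is neither precise nor plausible as an a priori statement: the obstruction emphasized in the paper right after Proposition \ref{expansive} is precisely that for $n$-expansive systems there is in general no $\eps>0$ with $W^s_{\eps}(x)\subset W^s(x)$ for all $x$ (the number $n(x,\eps)$ can exceed $1$ for infinitely many $x$ even in the presence of shadowing, by the examples in \cite{CC}), and your lemma is essentially an a priori uniform version of that inclusion. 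The caveat ``away from the extra points'' carries no content in the shadowing argument, because the candidate shadowing point $z$ produced by compactness could perfectly well pair with one of those exceptional points, and you have no way to exclude this. The backward direction is in even worse shape: you say one argues ``as in Proposition \ref{expansive}'s proof'', but that step uses $W^u_{\eps}(x)\subset W^u(x)$, i.e.\ genuine two-sided expansiveness; positive $n$-expansiveness gives no control at all over local unstable sets, so upgrading the same point $z$ from $\eps$-shadowing to limit shadowing in the past is unjustified. In effect the only known route to L-shadowing here passes through finiteness of $X$, so your reduction to Theorem \ref{h} is circular in practice. (A minor side remark: your justification that $f^m$ on a clopen piece is positively $n$-expansive is stated backwards --- for a fixed constant the sets $W^s_c(x,f^m)$ are larger than $W^s_c(x,f)$; the correct argument shrinks the constant by uniform continuity. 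That part is repairable.)

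For comparison, the paper never derives L-shadowing from shadowing: Theorem \ref{bla} is a special case of Theorem \ref{recdoi}, whose transitive case uses only the two-sided limit shadowing property \emph{with a gap} (Theorem \ref{gap}, obtained from the decomposition of \cite{O} and Theorem C of \cite{CC}); the crucial difference is that the shadowing point there is only required to limit shadow, not to simultaneously $\eps$-shadow, so the difficulty you run into never arises. One then shows $X$ is a finite union of unstable sets --- otherwise one produces infinitely many points $z_i\in W^u(y_i)\cap W^s(f^{-k(i)}(x))$ with $|k(i)|\le N$, hence some $W^s(f^m(x))$ is infinite, and $n+1$ of its points eventually lie in a single local stable set of size $c$, contradicting positive $n$-expansiveness. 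This bounds the number of periodic points, Proposition \ref{2} gives $CR(f)=\overline{Per(f)}$, and Lemma \ref{Lema} converts finiteness of $\Omega(f)$ into finiteness of $X$. If you wish to salvage your outline, the realistic fix is to abandon the L-shadowing claim and substitute the gap version of two-sided limit shadowing, which is available in your mixing reduction; as written, the proposal's key lemma is unproven and the proof does not go through.
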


As a consequence, the examples given at \cite{Mor} do not admit the shadowing property. This also proves the known fact that the minimal set of the Denjoy map does not have the shadowing property (see \cite{P} for the original proof). In \cite{APV}, Artigue, Pac\'ifico and Vieitez proved that if a transitive $2$-expansive homeomorphisms is defined in a compact surface, then it is expansive. They also give examples of $2$-exansive homeomorphisms, defined on surfaces, that are not expansive. So the transitivity assumption is necessary to these results. Theorem \ref{bla} goes in a similar direction, proving that transitive positively $n$-expansive homeomorphisms are actually positively expansive, assuming the shadowing property.




One important difference between the expansive homeomorphisms and the $n$-expansive ones is that for expansive homeomorphisms there exists $\eps>0$ such that the local stable set of $x$ of size $\eps$ is contained in the stable set of $x$ ($W^s(x)$) for every $x\in X$ (see \cite{Ma} for a proof), while for $n$-expansive homeomorphisms such number does not exist, even when the shadowing property is present (see the examples in \cite{CC}). This happens due to the existence of points in different stable sets belonging to the same local stable set. So the authors introduced the \emph{number of different stable sets in the local stable set of $x$ of size $\eps$}, denoted by $n(x,\eps)$ (see the precise definition in Section 2). The number of different unstable sets in the local unstable set of $x$ of size $\eps$ ($W^u_{\eps}(x)$) was introduced in a similar way and was denoted by $\bar{n}(x,\eps)$. It is clear that $W^s_{\eps}(x)\subset W^s(x)$ if, and only if, $n(x,\eps)=1$, and that $W^u_{\eps}(x)\subset W^u(x)$ if, and only if, $\bar{n}(x,\eps)=1$. The numbers $n(x,\eps)$ and $\bar{n}(x,\eps)$ can be greater than one. Indeed, in the examples of \cite{CC}, one can find, for every $\eps>0$, infinite points $x\in X$ satisfying $n(x,\eps)>1$. However, it is proved, in \cite{CC} Proposition 4.4, that if an $n$-expansive homeomorphisms has the shadowing property, then there exists $\eps>0$ such that both numbers $n(x,\eps)$ and $\bar{n}(x,\eps)$ does not exceed $n$ for every $x\in X$. When the L-shadowing property is present we obtain stronger consequences.  





\begin{mainthm}\label{D}
If an $n$-expansive homeomorphism $f$, defined in a compact metric space, has the L-shadowing property, then there exists $\eps>0$ such that the product $\bar{n}(x,\eps).n(x,\eps)$ does not exceed $n$ for every $x\in X$. If, in addition, $f$ is transitive, then $\bar{n}(x,\eps).n(y,\eps)\leq n$ for every $x,y\in X$.
\end{mainthm}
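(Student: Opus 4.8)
The strategy is to manufacture, out of the multiplicities $n(\,\cdot\,,\eps)$ and $\bar n(\,\cdot\,,\eps)$, a family of $n(\,\cdot\,,\eps)\cdot\bar n(\,\cdot\,,\eps)$ pairwise distinct points all contained in a single two‑sided dynamical ball $\Gamma_c(w):=\{v\in X:\ d(f^k(v),f^k(w))\le c\ \text{for all}\ k\in\Z\}$, where $c$ is the $n$‑expansiveness constant (so $|\Gamma_c(w)|\le n$ for every $w$), and then read off the bound. To set constants, choose $\eps_1>0$ with $4\eps_1<c$, let $\delta_1>0$ be furnished by the L-shadowing property for precision $\eps_1$, and, using uniform continuity of $f$, pick $\eps\in(0,\eps_1]$ small enough that $d(a,b)\le 2\eps$ forces $d(f(a),f(b))\le\delta_1/2$. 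Since shrinking $\eps$ only shrinks $W^s_\eps(\cdot)$ and $W^u_\eps(\cdot)$, hence only decreases $n(\cdot,\eps)$ and $\bar n(\cdot,\eps)$, a single small $\eps$ will serve both statements.

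For the first statement, fix $x$, let $y_1,\dots,y_p\in W^s_\eps(x)$ be representatives of the $p=n(x,\eps)$ distinct stable sets meeting $W^s_\eps(x)$, and $z_1,\dots,z_q\in W^u_\eps(x)$ representatives of the $q=\bar n(x,\eps)$ distinct unstable sets meeting $W^u_\eps(x)$. For each pair $(i,j)$ let $\xi^{ij}$ be the sequence equal to $f^k(z_j)$ for $k\le 0$ and to $f^k(y_i)$ for $k\ge 1$. It is a genuine orbit except at the index $0$, where the error is $d(f(z_j),f(y_i))\le\delta_1$ because $d(z_j,y_i)\le d(z_j,x)+d(x,y_i)\le 2\eps$; thus $\xi^{ij}$ is a $\delta_1$‑pseudo orbit, and a two‑sided limit pseudo orbit since its error is supported on one index. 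The L-shadowing property produces $w_{ij}$ that $\eps_1$‑shadows and two‑sided limit shadows $\xi^{ij}$. Letting $k\to+\infty$ gives $d(f^k(w_{ij}),f^k(y_i))\to 0$, so $w_{ij}$ lies in the stable set of $y_i$; letting $k\to-\infty$ gives that $w_{ij}$ lies in the unstable set of $z_j$. As distinct $y_i$ lie in disjoint stable sets and distinct $z_j$ in disjoint unstable sets, the $pq$ points $w_{ij}$ are pairwise distinct. Finally $\eps_1$‑shadowing together with $d(\xi^{ij}_k,f^k(x))\le\eps$ for all $k$ yields $d(f^k(w_{ij}),f^k(x))\le\eps_1+\eps<c$ for all $k\in\Z$, so all $w_{ij}$ lie in $\Gamma_c(x)$ and $\bar n(x,\eps)\,n(x,\eps)=pq\le n$.

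For the second statement assume $f$ transitive; if $X$ is finite there is nothing to prove, so assume it infinite and fix $x,y$. The construction is the same with one insertion: I bridge $x$ to $y$ by a single finite pseudo orbit. Using transitivity, fix once and for all a finite chain $x=c_0,c_1,\dots,c_L=y$ with $L\ge 1$ and consecutive errors at most $\delta_1/2$, the same one for every pair $(i,j)$. Taking $y_1,\dots,y_p\in W^s_\eps(y)$ representing the $p=n(y,\eps)$ stable classes in $W^s_\eps(y)$ and $z_1,\dots,z_q\in W^u_\eps(x)$ the $q=\bar n(x,\eps)$ unstable classes in $W^u_\eps(x)$, let $\xi^{ij}$ equal $f^k(z_j)$ for $k\le 0$, $\ c_k$ for $1\le k\le L$, and $f^{k-L}(y_i)$ for $k\ge L+1$; by the choices of the chain and of $\eps$ this is again a $\delta_1$‑pseudo orbit and a two‑sided limit pseudo orbit. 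L-shadowing gives $w_{ij}$; as before $w_{ij}$ lies in the unstable set of $z_j$ and $f^L(w_{ij})$ in the stable set of $y_i$, so the $pq$ points $w_{ij}$ are pairwise distinct. The point of using one and the same bridge is that $d(\xi^{ij}_k,\xi^{i'j'}_k)\le 2\eps$ for every $k$ — the bridging stretch is literally identical — whence $d(f^k(w_{ij}),f^k(w_{i'j'}))\le 2\eps_1+2\eps<c$ for every $k$, so all $w_{ij}$ lie in $\Gamma_c(w_{11})$ and $\bar n(x,\eps)\,n(y,\eps)\le n$.

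I expect the second construction to be the delicate point. One must keep the glued sequence simultaneously a $\delta$‑pseudo orbit (so the L-shadowing hypothesis applies) and a two‑sided limit pseudo orbit — which is why $\eps$ has to be chosen small against the modulus of continuity of $f$ at scale $\delta_1$ — and, more essentially, the bridge from $x$ to $y$ must be chosen independently of $(i,j)$, so that the $pq$ shadowing points stay within $c$ of one another and hence occupy a common two‑sided dynamical $c$‑ball. Producing such a bridge is precisely where transitivity enters, and making this step rigorous (connecting arbitrary $x,y$ by finite pseudo orbits in a transitive $n$‑expansive homeomorphism with the L-shadowing property) is the part that genuinely goes beyond the bare definition of topological transitivity; it is where I would spend the most care, possibly invoking the chain‑recurrence consequences of the L-shadowing property recorded earlier.
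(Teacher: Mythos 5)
Your proposal is correct and follows essentially the same route as the paper: glue the past orbit of an unstable representative to the future orbit of a stable representative (through a common bridge in the transitive case), apply the L-shadowing property to get a point of $W^u(q)\cap W^s(p)$ that also $\eps$-shadows, use disjointness of stable/unstable sets for injectivity, and observe that all the shadowing points lie in one dynamical ball of radius $c$. The only cosmetic differences are that you control the splicing error by uniform continuity and use a fixed finite pseudo-orbit from $x$ to $y$ (chain transitivity) as the bridge, whereas the paper splices so the jump is $d(q,p)\le 2\eps\le\delta$ and bridges with a genuine orbit segment of a point furnished by transitivity; your closing worry about the bridge is unfounded, since transitivity plus uniform continuity yields such $\delta$-chains exactly as the paper implicitly uses.
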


We remark that for non-transitive homeomorphisms, the product $\bar{n}(x,\eps).n(y,\eps)$ does not exceed $n$ either, provided $x$ and $y$ are suficiently close. The paper is organized as follows: in Section 2 we state all necessary definitions and prove Theorem \ref{bla}, while in Section 3 we obtain the main consequences of the L-shadowing property and prove Theorems \ref{h} and \ref{D}.

\section{Positively $N$-expansive homeomorphisms}\label{positiv}

Let $f$ be a homeomorphism defined in a compact metric space $(X,d)$. We denote by $B(x,\eps)$ the ball centered at $x$ and radius $\eps$ in $X$, which is the set of points whose distance to $x$ is smaller than $\eps$. The supremmum of the distance between points of $X$ is called the \emph{diameter of the space}. The \emph{orbit} of a point $x\in X$ is the set $\displaystyle{\{f^k(x)\,;\,\, k\in\Z\}}$. When this set is finite, we say that $x$ is a \emph{periodic point}. In this case, there exists $k\in\N$ such that $f^k(x)=x$. The set of all periodic points will be denoted by $Per(f)$. 

The \emph{omega limit set} of a point $x\in X$ is the set $\omega(x)$ of all accumulation points of the future orbit of $x$. A point $x\in X$ is a \emph{non-wandering point} if for each open subset $U$ of $X$ with $x\in X$, there is $k>0$ such that $f^k(U)\cap U\neq\emptyset$. The set of all non-wadering points of $f$ is called the \emph{non-wandering set} and is denoted by $\Omega(f)$. Let $I\subset\Z$ be a nonempty set of consecutive integers. We say that a sequence $(x_k)_{k\in I}\subset X$ is an \emph{$\eps$-pseudo-orbit} if it satisfies $d(f(x_k),x_{k+1})<\eps$ for all $k$ such that $k,k+1 \in I$. A point $x\in X$ is \emph{chain-recurrent} if for each $\eps>0$ there exists a non-trivial finite $\eps$-pseudo orbit starting and ending at $x$. The set of all chain-recurrent points is called the \textit{chain recurrent set} and is denoted by $CR(f)$. This set can be split into disjoint, compact and invariant subsets, called the \emph{chain-recurrent classes}. The \emph{chain-recurrent class} of a point $x\in X$ is the set of all points $y\in X$ such that for every $\eps>0$ there exist a periodic $\eps$-pseudo orbit containing both $x$ and $y$. We say that $f$ is \emph{transitive} if for any pair $(U,V)$ of non-empty open subsets of $X$, there exists $k\in\N$ such that $f^k(U)\cap V\neq\emptyset$. It is easy to see that transitive homeomorphisms admit only one chain recurrent class, that is the whole space. 

Its well known that the closure of the set of all periodic points and also the omega limit set of any point are contained in the non-wandering set, that is, in turn, contained in the chain recurrent set (see, for example, chapter $1$ of \cite{S2}). To define $n$-expansiveness, we consider the \emph{local unstable set of $x$ of size $c$} $$W^u_{c}(x):=\{y\in X; \,\, d(f^k(y),f^k(x))\leq c \,\,\,\, \textrm{for every} \,\,\,\, k\leq 0\}.$$ We say that $f$ is \emph{$n$-expansive} if there exists $c>0$ such that for each $x\in X$ the set $$W^s_{c}(x)\cap W^u_{c}(x)$$ contains at most $n$ different points of $X$. The number $c$ is called the \emph{$n$-expansivity constant of $f$} and the set $W^s_{c}(x)\cap W^u_{c}(x)$ is called the \emph{dynamical ball of x of size $c$}. Moreover, the \emph{stable set of $x$} is the set $$W^s(x):=\{y\in X; \,\, d(f^k(y),f^k(x))\to0,  \,\,\,\, k\to\infty\}$$ while the \emph{unstable set of $x$} is the set $$W^u(x):=\{y\in X; \,\, d(f^k(y),f^k(x))\to0, \,\,\,\,  k\to-\infty\}.$$ For an $n$-expansive homeomorphism $f$, defined in a compact metric space $X$, some $\eps>0$ and $x\in X$, we define the \emph{number of different stable sets} of $f$ in $W^s_{\eps}(x)$ as the number $n(x,\eps)$ satisfying:
\begin{enumerate}
  \item there exists a set $E(x,\eps)\subset W_{\eps}^s(x)$ with $n(x,\eps)$ elements such that if two different points $y,z\in E(x,\eps)$ then $y\notin W^s(z)$,
  \item if $y_1,y_2,\dots,y_{n(x,\eps)+1}$ are $n(x,\eps)+1$ different points of $W_{\eps}^s(x)$, then there exist two different points $y_i,y_j\in\{y_1,y_2,\dots,y_{n(x,\eps)+1}\}$ such that $y_i\in W^s(y_j)$.
\end{enumerate}

More information about this number can be found in \cite{CC}. We will also need several notions of the shadowing property, so we define them precisely. We begin with the (standard) shadowing property. We say that a sequence $(x_k)_{k\in\Z}\subset X$ is $\eps$-\emph{shadowed} if there exists $y\in X$ satisfying $$d(f^k(y),x_k)<\eps, \,\,\,\,\,\, k\in\Z.$$ We say that $f$ has the \emph{shadowing property} if for every $\eps>0$ there exists $\delta>0$ such that every $\delta$-pseudo-orbit is $\eps$-shadowed. This property was extensively studied due to its relation to the hyperbolic and stability theories (see monographs \cite{AH}, \cite{P}). A similar property was introduced by T. Eirola, O. Nevanlinna and S. Pilyugin in \cite{ENP}. It is called \emph{limit shadowing}.
We say that a sequence $(x_k)_{k\in\N}\subset X$ is a \emph{limit pseudo-orbit} if it satisfies $$d(f(x_k),x_{k+1})\rightarrow 0, \,\,\,\,\,\, k\rightarrow\infty.$$ A sequence $\{x_k\}_{k\in\N}$ is \emph{limit-shadowed} if there exists $y\in X$ such that $$d(f^k(y),x_k)\rightarrow 0, \,\,\,\,\,\, k\rightarrow\infty.$$ We say that $f$ has the \emph{limit shadowing property} if every limit pseudo-orbit is limit-shadowed. If we consider bilateral sequences of $X$ we can define the \emph{two-sided limit shadowing property}. We say that $(x_k)_{k\in\Z}$ is a \emph{two-sided limit pseudo-orbit} if it satisfies $$d(f(x_k),x_{k+1})\rightarrow 0, \,\,\,\,\,\, |k|\rightarrow\infty.$$
A sequence $(x_k)_{k\in\Z}$ is \emph{two-sided limit shadowed} if there exists $y\in X$ satisfying $$d(f^k(y),x_k)\rightarrow 0, \,\,\,\,\,\,
|k|\rightarrow \infty.$$ We say that $f$ has the \emph{two-sided limit shadowing property} if every two-sided limit pseudo-orbit is two-sided limit shadowed. A slightly different version was defined by the first author and D. Kwietniak in \cite{CK}, introducing gaps in the shadowing orbits. It is called \emph{two-sided limit shadowing with a gap}.

\begin{definition}
We say that a sequence $(x_k)_{k\in\Z}\subset X$ is \emph{two-sided limit shadowed with gap $K\in\Z$} if there exists $y\in X$ satisfying
\begin{align*}
d(f^k(y),x_k)\to 0, \,\,\,\,\,\, k\to-\infty,\\
d(f^{K+k}(y),x_k)\to 0 \,\,\,\,\,\, k\to\infty.
\end{align*}
For $N\in\N$ we say that $f$ has the \emph{two-sided limit shadowing property with gap $N$} if every two-sided limit pseudo-orbit of $f$ is two sided limit shadowed with gap $K\in \Z$ with $|K|\le N$. We also say that $f$ has the \emph{two-sided limit shadowing property with a gap} if such an $N\in\N$ exists.
\end{definition}

In \cite{C} it is proved that any expansive homeomorphism with the shadowing property has the limit shadowing property and also the two-sided limit shadowing property, provided it is topologically mixing. In \cite{CK} there are examples of transitive expansive homeomorphisms with the shadowing property but without the two-sided limit shadowing property. However, it is proved that these systems admit the two-sided limit shadowing property with a gap. Part of these results was generalized in \cite{CC} for $n$-expansive homeomorphisms with the shadowing property. Precisely, it is proved that $n$-expansive homeomorphisms with the shadowing property have the limit shadowing property and also the two-sided limit shadowing property, if they are topologically mixing. The first result of this section obtains the two-sided limit shadowing property with a gap in the transitive scenario.

\begin{theorem}\label{gap}
If a transitive $n$-expansive homeomorphism has the shadowing property then it has the two-sided limit shadowing property with a gap.
\end{theorem}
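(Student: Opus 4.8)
The plan is to reduce to the topologically mixing case by spectral decomposition and then invoke the two-sided limit shadowing property established in \cite{CC} for topologically mixing $n$-expansive homeomorphisms with the shadowing property; the gap will be produced by the period of the decomposition. Since $f$ is transitive and has the shadowing property, the spectral decomposition theorem (see \cite{AH}, \cite{P}) gives an integer $d\ge1$ and a partition $X=X_0\cup\dots\cup X_{d-1}$ into pairwise disjoint compact (indeed clopen) sets with $f(X_i)=X_{i+1}$ (indices mod $d$) such that $g:=f^d|_{X_0}$ is topologically mixing and has the shadowing property. Put $\rho:=\min_{i\ne j}\dist(X_i,X_j)>0$. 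I would first check that $g$ is again $n$-expansive: if $c$ is an $n$-expansivity constant for $f$ and $c'>0$ is chosen so that $d(a,b)\le c'$ forces $d(f^j(a),f^j(b))\le c$ for all $|j|\le d$, then every point of $W^s_{c'}(x,g)\cap W^u_{c'}(x,g)$ lies in $W^s_c(x,f)\cap W^u_c(x,f)$, which has at most $n$ elements. Hence by \cite{CC} the map $g$ has the two-sided limit shadowing property, and I will prove the theorem with $N:=d-1$.

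Let $(x_k)_{k\in\Z}$ be a two-sided limit pseudo-orbit of $f$. Because $d(f(x_k),x_{k+1})\to0$ as $|k|\to\infty$, these errors are eventually smaller than $\rho$, and a short induction (consecutive errors below $\rho$ force the pseudo-orbit to run cyclically through the pieces $X_i$) produces $i_-,i_+\in\{0,\dots,d-1\}$ and $M\in\N$ with $x_k\in X_{(k+i_-)\bmod d}$ for all $k\le -M$ and $x_k\in X_{(k+i_+)\bmod d}$ for all $k\ge M$. In particular $x_{dk}\in X_{i_-}$ once $dk\le -M$ and $x_{dk}\in X_{i_+}$ once $dk\ge M$, so $f^{-i_-}(x_{dk})$ and $f^{-i_+}(x_{dk})$ lie in $X_0$ on those ranges. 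Define $z_k:=f^{-i_-}(x_{dk})$ for $k$ very negative, $z_k:=f^{-i_+}(x_{dk})$ for $k$ very positive, and interpolate arbitrarily inside $X_0$ on the bounded middle range. Using that $d(f^d(x_{dk}),x_{dk+d})\to0$ as $|k|\to\infty$ — itself a consequence of $d(f(x_l),x_{l+1})\to0$ and the uniform continuity of $f,\dots,f^{d-1}$ — one checks that $(z_k)_{k\in\Z}$ is a two-sided limit pseudo-orbit of $g$ (all but finitely many of its errors tend to $0$).

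Apply the two-sided limit shadowing property of $g$ to obtain $w\in X_0$ with $d(g^k(w),z_k)\to0$ as $|k|\to\infty$, and set $y:=f^{i_-}(w)$ and $K:=i_+-i_-$, so $|K|\le d-1=N$. Applying $f^{i_-}$ to $d(f^{dk}(w),f^{-i_-}(x_{dk}))\to0$ as $k\to-\infty$ gives $d(f^{dk}(y),x_{dk})\to0$; filling in the $j$ intermediate iterates for $0\le j<d$ by uniform continuity and using $d(f^j(x_{dk}),x_{dk+j})\to0$ yields $d(f^m(y),x_m)\to0$ as $m\to-\infty$. Likewise, applying $f^{i_+}$ as $k\to+\infty$ and noting $f^{dk+i_+}(w)=f^{dk+K}(y)$ gives $d(f^{dk+K}(y),x_{dk})\to0$, and the same interpolation gives $d(f^{K+m}(y),x_m)\to0$ as $m\to+\infty$. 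Thus $y$ two-sided limit shadows $(x_k)_{k\in\Z}$ with gap $K$, $|K|\le N$, and the theorem follows.

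The conceptual step is the reduction to the mixing case, after which the cyclic period $d$ directly supplies the gap, exactly as in the expansive setting of \cite{CK}. The main obstacle I expect is bookkeeping rather than substance: making the phases $i_\pm$ well-defined, arranging the index alignment of the spliced sequence $(z_k)$ in the middle so that it is genuinely a two-sided limit pseudo-orbit of $g$, and carrying out the routine uniform-continuity estimates that transfer the shadowing from the sampled times $dk$ under $g=f^d$ to all times $m$ under $f$.
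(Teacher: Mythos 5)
Your proposal is correct and follows essentially the same route as the paper: the spectral decomposition available for transitive homeomorphisms with shadowing, the two-sided limit shadowing property of the mixing power on each piece from \cite{CC}, and the phase/gap bookkeeping of \cite{CK}. The only difference is that you carry out explicitly the details the paper delegates to \cite{CK} (including the verification that $f^d$ restricted to a piece is again $n$-expansive), and these details are all in order.
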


To prove it, we basically follow the proof of the expansive case in \cite{CK}. In this scenario, the spectral decomposition theorem (\cite[Theorem 3.1.11]{AH}) assures the existence of an integer $N$ such that $X$ can be written as a disjoint union, $X=B_1\cup\ldots\cup B_N$ of non-empty closed sets satisfying
$$f(B_{i-1})=B_{i\bmod N} \quad\text{for }i=2,\ldots,N,$$ and such that $f^N|_{B_i}\colon B_i \to B_i$ is topologically mixing for each $i$. This decomposition and the fact that $f^N|_{B_i}$ has the two-sided limit shadowing property for every $i$, are enough to prove that $f$ has the two-sided limit shadowing property with gap $N$. We note that the expansiveness is not, indeed, necessary to obtain this decomposition: it is proved in \cite[Theorem 8]{O} that any transitive homeomorphism admitting the shadowing property also admits it. Since $f^N|_{B_i}$ has the two-sided limit shadowing property in the $n$-expansive case (Theorem C in \cite{CC}) we can just repeat the proof of \cite{CK} and prove Theorem \ref{gap}. Another fact that is still true for $n$-expansive homeomorphisms is the following:     

\begin{proposition}\label{2}
If an $n$-expansive homeomorphism, defined in a compact metric space, has the shadowing property, then its chain recurrent set is equal to the closure of the set of its periodic points, i.e., $CR(f)=\overline{Per(f)}$. 
\end{proposition}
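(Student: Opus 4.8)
The plan is to adapt the classical argument that the shadowing property forces $CR(f)=\overline{Per(f)}$, replacing the appeal to expansiveness in its final step by $n$-expansiveness. The inclusion $\overline{Per(f)}\subseteq CR(f)$ requires no hypothesis, since periodic points are chain recurrent and $CR(f)$ is closed, so all the work is in the reverse inclusion. I would fix $x\in CR(f)$ and $\eps>0$ and aim to produce a periodic point within $\eps$ of $x$.

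To do this, let $c>0$ be an $n$-expansivity constant of $f$, and let $\delta>0$ be the constant the shadowing property assigns to $\eps':=\min\{\eps,c/3\}$. Since $x$ is chain recurrent there is a non-trivial finite $\delta$-pseudo-orbit $x=x_0,x_1,\dots,x_m=x$ with $m\geq 1$; I would extend it to a bi-infinite $\delta$-pseudo-orbit $(y_k)_{k\in\Z}$ by periodicity, setting $y_k:=x_{\,k\bmod m}$ (the only thing to verify is the wrap-around point, which is immediate from $x_m=x_0$). Shadowing then yields $y\in X$ with $d(f^k(y),y_k)<\eps'$ for every $k\in\Z$; in particular $d(y,x)=d(y,y_0)<\eps'\leq\eps$.

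The crux is to show that this shadowing point $y$ is itself periodic, and here is where $n$-expansiveness enters. Because $(y_k)_k$ is exactly $m$-periodic, for every $j\in\Z$ the orbit of $f^{jm}(y)$ also $\eps'$-shadows $(y_k)_k$: indeed, using $y_{k+jm}=y_k$, one has $d(f^k(f^{jm}(y)),y_k)=d(f^{k+jm}(y),y_{k+jm})<\eps'$. Feeding this and $d(f^k(y),y_k)<\eps'$ into the triangle inequality gives $d(f^k(f^{jm}(y)),f^k(y))<2\eps'\leq c$ for all $k\in\Z$, so $f^{jm}(y)\in W^s_c(y)\cap W^u_c(y)$ for every $j\in\Z$. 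Since this dynamical ball has at most $n$ points, the set $\{f^{jm}(y):j\in\Z\}$ must be finite, forcing $f^{jm}(y)=f^{j'm}(y)$ for some $j\neq j'$ and hence $f^{|j-j'|m}(y)=y$. Thus $y\in Per(f)$, and since $\eps>0$ was arbitrary, $x\in\overline{Per(f)}$.

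I do not expect a genuine obstacle here: the one point that needs care is the observation that the period-$m$ shift of the shadowing orbit lands back inside the dynamical ball (which is precisely why a periodic $\delta$-pseudo-orbit, rather than some asymptotic pseudo-orbit, is the right input for shadowing), together with the elementary fact that a forward orbit contained in a finite set is periodic. If desired, this last step could instead be phrased through the bound $n(y,\eps)\leq n$ from \cite{CC}, but the direct finiteness argument seems cleanest.
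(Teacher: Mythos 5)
Your proposal is correct and follows essentially the same route as the paper: shadow the periodically extended $\delta$-pseudo-orbit through $x$, observe that all the shifts $f^{jm}(y)$ of the shadowing point lie in a single dynamical ball of radius at most the $n$-expansivity constant, and conclude periodicity from the $n$-point bound. Your write-up is just slightly more explicit about why finiteness of $\{f^{jm}(y)\}_{j\in\Z}$ forces $y$ to be periodic, a step the paper states without detail.
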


\begin{proof} It is enough to prove that each chain-recurrent point is approximated by periodic points. Let $x\in X$ be a chain recurrent point and $\eps>0$ be given. We can assume $\eps$ is smaller than the $n$-expansiviness constant of $f$. Let $0<\delta<\frac{\eps}{2}$, given by the shadowing property, be such that each $\delta$-pseudo orbit is $\frac{\eps}{2}$ shadowed. Since $x$ is chain recurrrent, there exists a periodic $\delta$ pseudo-orbit $\{x_k\}_{k=0}^l\subset X$ such that $x_0=x=x_l$. The shadowing property assures the existence of $z\in X$ satisfying $$d(f^k(z),x_k)<\frac{\eps}{2}, \,\,\,\,\,\, k\in\Z.$$ In particular, $f^{kl}(z)\in W^s_\eps(z)\cap W^u_\eps(z)$ for each $k\in\mathbb{Z}$. Since $\eps$ is smaller than the $n$-expansive constant of $f$, it follows that the orbit of $z$ is finite and $z$ is a periodic point satisfying $d(x,z)<\eps$. Since this can be done for each $\eps>0$, the proposition is proved.
\end{proof}

We prove that finiteness of the non-wandering set is enough to obtain finiteness of the space for positively $n$-expansive homeomorphisms.

\begin{lemma}\label{Lema} If a positively $n$-expansive homeomorphism is defined in a compact metric space $X$, then $\Omega(f)$ is finite if, and only if, $X$ is finite. 
\end{lemma}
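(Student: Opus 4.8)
The plan is to prove the nontrivial direction: if $\Omega(f)$ is finite then $X$ is finite; the converse is immediate since $\Omega(f)\subset X$. The engine of the proof is the following claim, which uses only compactness and positive $n$-expansiveness and not the finiteness of $\Omega(f)$: \emph{if $p$ is a periodic point of $f$, then $W^s(p)=\{p\}$}. First I would reduce to the case where $p$ is a fixed point. If $p$ has period $m$, then $g:=f^m$ is again positively $n$-expansive: by uniform continuity choose $\delta\in(0,c]$ such that $d(f^i(a),f^i(b))\le c$ whenever $d(a,b)\le\delta$ and $0\le i<m$; then the local stable set of $x$ of size $\delta$ for $g$ is contained in the local stable set of $x$ of size $c$ for $f$, for every $x\in X$, so it has at most $n$ points. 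A short computation using the equicontinuity of $f^0,\dots,f^{m-1}$ also shows that $W^s(p)$ for $f$ coincides with $W^s(p)$ for $g$. So it suffices to prove the claim for a fixed point of a positively $n$-expansive homeomorphism, which I rename $f$ again, with positive $n$-expansivity constant $c$.

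Now consider $S:=W^s(p)\cap W^s_c(p)$. It contains $p$, it has at most $n$ elements since $S\subset W^s_c(p)$, and it is invariant: $f(S)\subset S$ because $W^s(p)$ is invariant and $W^s_c(p)$ is forward invariant (here it is essential that $p$ is fixed), and, $f|_S$ being injective and $S$ finite, $f(S)=S$. A finite invariant set consists of periodic points, but a periodic point lying in $W^s(p)$ must equal $p$, because a periodic sequence converging to the fixed point $p$ is eventually, hence always, equal to $p$. Therefore $S=\{p\}$. Finally, given $y\in W^s(p)$, we have $f^k(y)\in W^s_c(p)$ as soon as $k$ is large enough that $d(f^\ell(y),p)\le c$ for all $\ell\ge k$; then $f^k(y)\in S=\{p\}$, so $y=p$. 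This proves the claim.

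To conclude, let $x\in X$ be arbitrary. The set $\omega(x)$ is a nonempty compact invariant subset of the finite set $\Omega(f)$, and since $\omega$-limit sets are internally chain transitive and a finite internally chain-transitive set is a single periodic orbit, $\omega(x)$ is the orbit of some periodic point $p$. Applying the claim to $g=f^m$, where $m$ is the period of $p$, the $\omega$-limit set of $x$ under $g$ is a single fixed point $q$ of $g$ in this orbit, so $f^{mk}(x)\to q$; by equicontinuity of $f^0,\dots,f^{m-1}$ this gives $d(f^k(x),f^k(q))\to 0$, i.e. $x\in W^s(q)=\{q\}$. Hence $x=q$ is periodic, and $X=Per(f)\subset\Omega(f)$ is finite.

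The only genuine obstacle I foresee is isolating the right object in the key claim: recognizing that $S=W^s(p)\cap W^s_c(p)$ is simultaneously small enough (at most $n$ points, hence a finite invariant set whose elements are periodic) and confined to $W^s(p)$ (which forces those periodic elements to collapse onto $p$). The rest --- the reduction to fixed points through powers of $f$, and passing from ``all stable sets of periodic points are trivial'' plus ``$\Omega(f)$ finite'' to ``every point is periodic'' --- is routine, modulo the standard fact that $\omega$-limit sets are internally chain transitive.
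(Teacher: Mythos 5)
Your proof is correct, but it takes a genuinely different route from the paper's. The paper argues by contradiction on a non-periodic point $x$: positive $n$-expansiveness is used only to rule out $\omega(x)$ being a single fixed point (otherwise the infinitely many distinct forward iterates of $x$ would eventually all lie in one local stable set), and then two fixed points $p\neq q$ in the finite set $\omega(x)$ are played against each other through a nested-annuli construction around $p$ (exit points of the orbit from shrinking neighborhoods of $p$) that produces infinitely many distinct points of $\omega(x)$, contradicting finiteness. You instead isolate a standalone structural claim --- for any positively $n$-expansive homeomorphism, $W^s(p)=\{p\}$ for every periodic point $p$ --- proved by noting that $W^s(p)\cap W^s_c(p)$ is a finite forward-invariant set, hence by injectivity a finite set of periodic points, which must collapse to $p$; the finiteness of $\Omega(f)$ enters only at the end, where internal chain transitivity of $\omega$-limit sets forces $\omega(x)$ to be a single periodic orbit, so every $x$ lies in some $W^s(q)=\{q\}$ and is itself periodic. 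Your version buys a reusable statement valid without any finiteness hypothesis, and it treats the passage to an iterate honestly (checking that $f^m$ is again positively $n$-expansive and that the stable sets of $p$ for $f$ and $f^m$ coincide), a point the paper glosses over when it says ``we can suppose $p$ and $q$ are fixed points''; the price is importing the standard but nontrivial fact that $\omega$-limit sets are internally chain transitive (used both for $\omega_f(x)$ being one periodic orbit and for $\omega_{f^m}(x)$ being a singleton), whereas the paper's annulus argument is self-contained. If you wanted a fully self-contained write-up, that chain-transitivity input could be replaced by an elementary argument in the spirit of the paper's construction; as written, though, your argument is complete and correct.
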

\begin{proof} It is obvious that $X$ being finite imples $\Omega(f)$ is finite. Now, finiteness of the non-wandering set implies that all non-wandering points are periodic. We will prove that all points of $X$ are periodic. This is enough to prove that $X$ is finite, since $X$ would be contained in the non-wandering set. Assume $x\in X$ is not a periodic point. We note that $\omega(x)$ is not a single fixed point, since this would imply the existence of infinite different points in the same local stable set, contradicting positively $n$-expansiveness. Then there exists at least two periodic points $p$ and $q$ in the $\omega$-limit set of $x$. We can suppose $p$ and $q$ are fixed points, considering some iterate of $f$. Let $d=d(p,q)$ and $n_1\in\mathbb{N}$ be such that $$\frac{1}{m}<d/4, \,\,\,\,\,\, m\geq n_1.$$ Let $\eps_1=\frac{1}{n_1}$ and choose $\delta_1>0$ such that $d(a,p)<\delta_1$ implies $d(f(a),p)<\eps_1$. Since $p\in \omega(x)$, there exists $k_0\in\N$ such that $f^{k_0}(x)\in B(p,\delta_1)$. Since $q\in\omega(x)$, there exists $k_1$, the least natural number greater than $k_0$, such that $d(f^{k_1}(x),p)>\eps_1$. Observe that $$f^{k_1-1}(x)\in B(p,\eps_1)\setminus B(p,\delta_1).$$ Indeed, $f^{k_1-1}(x)\in B(p,\eps_1)$, since $k_1$ is the least natural number greater than $k_0$ satisfying $d(f^{k_1}(x),p)>\eps_1$, and $f^{k_1-1}(x)\notin B(p,\delta_1)$, because $f^{k_1-1}(x)\in B(p,\delta_1)$ would imply $f^{k_1}(x)\in B(p,\eps_1)$ and $d(f^{k_1}(x),p)<\eps_1$, which is a contradiction. Since $p$ and $q$ are on the $\omega$-limit set of $x$ we can repeat this argument infinite times in the orbit of $x$ and obtain an increasing sequence $(k_m)_{m\in\N}$ of natural numbers such that $$f^{k_m}(x)\in B(p,\eps_1)\setminus B(p,\delta_1)$$ for each $m\in\mathbb{N}$. Hence, there is at least one point $$z_1\in \overline{B(p,\eps_1)\setminus B(p,\delta_1)}\cap\omega(x).$$ Now consider $n_2>n_1$ such that $$\frac{1}{m}<\delta_1, \,\,\,\,\,\, m\geq n_2.$$ Let $\eps_2=\frac{1}{n_2}$ and choose $\delta_2>0$ such that $d(a,p)<\delta_2$ implies $d(f(a),p)<\eps_2$. The same argument assures the existence of $$z_2\in\overline{B(p,\eps_2)-B(p,\delta_2)}\cap\omega(x).$$
It is clear that $z_2\neq z_1$. With an induction process we can find a sequence $\{z_m\}_{m\in\mathbb{N}}$ of distinct points of $X$ that belong to $\omega(x)$, contradicting the fact that $\omega(x)$ is finite. This proves that all points of $X$ are periodic and finishes the proof.
\end{proof}

Now we prove Theorem \ref{bla} as a particular case of the theorem below. Indeed, transitivity implies that the whole space is a single chain recurrent class. This  theorem actualy proves that either there is an infinite number of chain recurrent classes, or $X$ is finite.

\begin{theorem}\label{recdoi} If a positively $n$-expansive homeomorphism, defined in a compact metric space $X$, has the shadowing property and admits a finite number of chain recurrent classes, then $X$ is finite.
\end{theorem}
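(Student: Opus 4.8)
The plan is to use Lemma~\ref{Lema} to reduce the problem to showing that every chain recurrent class is finite, and then to kill each class by passing to a topologically mixing piece, where positive $n$-expansiveness forces a local stable set to fill up the whole piece. Concretely, I would first observe that by Lemma~\ref{Lema} it suffices to prove that $\Omega(f)$ is finite, and since $\Omega(f)\subseteq CR(f)$ and, by hypothesis, $CR(f)$ is the union of finitely many chain recurrent classes, it is enough to show that each chain recurrent class $C$ is finite. So suppose some $C$ is infinite. By the spectral decomposition theorem for homeomorphisms with the shadowing property (\cite[Theorem 3.1.11]{AH}; see also \cite[Theorem 8]{O}), applied using the assumption of finitely many chain recurrent classes, one can write $C=Z_1\cup\dots\cup Z_N$ with $f$ permuting the pieces cyclically, $g:=f^N|_{Z_1}$ topologically mixing, and $g$ enjoying the shadowing property; since $C$ is infinite I may take $Z_1$ infinite. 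I would then check that $g$, and every power of it, is again positively $n$-expansive: choosing $c>0$ small enough that $d(a,b)\le c$ forces $d(f^j(a),f^j(b))\le c_*$ for $0\le j<N$, where $c_*$ is a positively $n$-expansive constant of $f$, one gets that the $g$-local stable set $W^s_c(x)$ is contained in the $f$-local stable set $W^s_{c_*}(x)$ for every $x\in Z_1$, hence has at most $n$ points. Since $g$ is then $n$-expansive with the shadowing property, Proposition~\ref{2} gives $CR(g)=\overline{Per(g)}$, and as $g$ is topologically mixing we have $CR(g)=Z_1$, so $g$ has a periodic point $p$.

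The core of the argument is now the topologically mixing situation. Replacing $g$ by a power (topological mixing, the shadowing property and positive $n$-expansiveness all survive), I may assume $p$ is a fixed point of $g$; let $c_0>0$ be a positively $n$-expansive constant of $g$ and set $W:=W^s_{c_0}(p)$, a set with at most $n$ points. Two facts then finish the proof. First, since $p$ is fixed, $W$ is forward $g$-invariant; being finite and $g$ a homeomorphism, $W$ is then fully $g$-invariant, i.e. $g^{-1}(W)=W$. Second, $W$ is dense in $Z_1$: given $y_0\in Z_1$ and $\rho>0$, I would choose $\eps\le c_0$ with $2\eps<\rho$, take $\delta>0$ from the shadowing property for $\eps$, and use uniform continuity of $g$ together with topological mixing to find $k_0\ge 1$ and $y'\in B(y_0,\eps)$ with $d(g^{k_0+1}(y'),p)<\delta$; then $y',\,g(y'),\,\dots,\,g^{k_0}(y'),\,p,\,p,\,p,\dots$ is a $\delta$-pseudo orbit, hence is $\eps$-shadowed by some $z\in Z_1$, and this $z$ satisfies $d(z,y_0)<2\eps<\rho$ together with $d(g^k(z),p)<\eps\le c_0$ for all $k>k_0$; therefore $g^{k_0+1}(z)\in W$ and, by the first fact, $z\in W$. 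So $W$ meets every ball of $Z_1$, whence $Z_1=\overline W=W$ is finite, contradicting the choice of $Z_1$. This shows every chain recurrent class is finite, so $CR(f)$ is finite, so $\Omega(f)$ is finite, and $X$ is finite by Lemma~\ref{Lema}.

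The delicate part is the reduction to the topologically mixing case: one must be sure that, under the shadowing property, a chain recurrent class genuinely decomposes via the spectral decomposition into cyclically permuted pieces carrying a topologically mixing power of $f$, and that this power is still positively $n$-expansive and still has the shadowing property. Granting that, the rest is the short pseudo-orbit computation above, whose whole content is that positive $n$-expansiveness makes the local stable set of a fixed point a finite set, invertibility makes that finite set fully invariant, and topological mixing together with shadowing spreads it densely — so it has to be the entire mixing piece.
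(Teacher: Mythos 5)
Your proof is correct, but its core is genuinely different from the paper's. Both arguments share the same outer reduction: restrict to each chain recurrent class (which, as the paper also asserts without detailed proof, is transitive, positively $n$-expansive and inherits the shadowing property when there are finitely many classes), and close with Proposition \ref{2} and Lemma \ref{Lema}. Inside the transitive case, however, the paper uses the two-sided limit shadowing property with a gap (Theorem \ref{gap}, which rests on Theorem C of \cite{CC} and the argument of \cite{CK}) to produce, for infinitely many pairwise distinct unstable sets, points $z_i\in W^u(y_i)\cap W^s(f^{-k(i)}(x))$ with uniformly bounded gaps, and derives a contradiction with positive $n$-expansiveness; this yields along the way the structural by-product that $X$ is covered by finitely many unstable sets. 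You instead pass through the decomposition of \cite[Theorem 8]{O} to a topologically mixing piece and argue there directly: the local stable set $W=W^s_{c_0}(p)$ of a fixed point of a suitable power has at most $n$ points, is forward invariant, hence (being finite, with the map injective) fully invariant, and transitivity of the power together with plain shadowing makes $W$ dense, so the piece equals $W$ and is finite. Your route avoids the limit-shadowing machinery entirely -- only the ordinary shadowing property is used -- at the cost of the routine verifications that mixing, shadowing and positive $n$-expansiveness survive passing to powers and to the pieces $Z_i$; the uniform continuity argument you give for expansiveness of powers is the right one, and the shadowing point of a pseudo-orbit in $Z_1$ does lie in $Z_1$ because the pieces partition the class and are at positive mutual distance (also note your pseudo-orbit should be completed to a two-sided one, e.g.\ by the backward orbit of $y'$, to match the paper's definition of shadowing -- a cosmetic fix). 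In effect you prove the mixing case of Theorem \ref{bla} directly and bootstrap from it, which is arguably more elementary; the paper's detour through Theorem \ref{gap} reuses machinery it develops anyway and gives the additional information about finitely many unstable sets.
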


\begin{proof} We suppose $f$ admits a finite number of chain recurrent classes and prove $X$ is finite. We note that it is enough to prove the theorem when $f$ is transitive. Indeed, the restriction of $f$ to each of its chain recurrent classes is transitive, positively $n$-expansive and has the shadowing property. This proves that each chain recurrent class is finite and, hence, that the chain-recurrent set is a finite union of periodic points. Lemma \ref{Lema} proves $X$ is finite. Now, let $f$ be a transitive positively $n$-expansive homeomorphism, defined in a compact metric space $X$, with the shadowing property. We claim that there exists a finite set $A\subset X$ such that $$X=\bigcup_{x\in A}W^u(x).$$ If this is not the case, we can find a sequence $\{y_i\}_{i\in\N}\subset X$ such that $y_i\in W^u(y_j)$ if, and only if, $i=j$. Let $x$ be an arbitrary point of $X$ and for each $i\in\N$ consider the sequence $(x_k^i)_{k\in\Z}$ defined by $$x_k^i=\begin{cases}
       f^k(y_i), & k<0 \\
       f^k(x), & k\geq0.
     \end{cases}$$
This sequence is a two-sided limit pseudo orbit, so the two-sided limit shadowing property with a gap, which we obtained in Theorem \ref{gap}, assures the existence of $N\in\N$ and $z_i\in X$ that two-sided limit shadows $(x_k^i)_{k\in\Z}$ with gap $k(i)\in\Z$, where $|k(i)|\leq N$. In particular, $$z_i\in W^u(y_i)\cap W^s(f^{-k(i)}(x))$$ for every $i\in\N$. Note that $z_i=z_j$ if, and only if, $i=j$. Indeed, $z_i=z_j$ implies $z_j\in W^u(y_i)$, which, in turn, implies $y_j\in W^u(y_i)$, and this just happens when $i=j$. Now, since $z_i\in W^s(f^{-k(i)}(x))$ and $|k(i)|\leq N$ for every $i\in\N$, it follows that there exists $m\in\Z$ such that $|m|\leq N$ and $W^s(f^m(x))$ is infinite. Otherwise, $(z_i)_{i\in\N}$ would be contained in the finite set $$\bigcup_{i=-N}^NW^s(f^i(x)),$$ which is not possible since $(z_i)_{i\in\N}$ is infinite. Thus, we can consider $n+1$ different points $w_i\in W^s(f^m(x))$, with $i\in\{1,\dots,n+1\}$. For each $i\in\{1,\dots,n+1\}$ let $j(i)\in\N$ be such that $$d(f^k(w_i),f^{k+m}(x))<\frac{c}{2}$$ for every $k\geq j(i)$, where $c$ is the positive $n$-expansive constant of $f$. If we let $r=\max\{j(i); \,\,\, i\in\{1,\dots,n+1\}\}$, then it follows that $$d(f^k(w_i),f^{k+m}(x))<\frac{c}{2}$$ for every $k\geq r$ and every $i\in\{1,\dots,n+1\}$. This implies that
\begin{eqnarray*}
d(f^k(w_i),f^k(w_j))&\leq& d(f^k(w_i),f^{k+m}(x)) + d(f^{k+m}(x),f^k(w_j))\\
&\leq& \frac{c}{2}+\frac{c}{2}\\
&=&c
\end{eqnarray*}
for every $k\geq r$ and each pair $(i,j)\in\{1,\dots,n+1\}\times\{1,\dots,n+1\}$. This means that all the points $f^r(w_i)$, with $i\in\{1,\dots,n+1\}$, belong to the same local stable set of size $c$. But this contradicts the fact that $c$ is a positive $n$-expansive constant of $f$ and proves the claim. We observe that this implies that $f$ admits only a finite number of periodic points, since different periodic points belong to different unstable sets. Since $f$ is positively $n$-expansive and has the shadowing property, Proposition \ref{2} assures that the chain-recurrent set is a finite union of periodic points. Then Lemma \ref{Lema} proves that $X$ is finite.

\end{proof}

\section{The L-shadowing property}\label{localtsls}

In this section, we discuss further the L-shadowing property, defined in the introduction. We begin proving that both the shadowing property and a finite number of chain recurrent classes are necessary conditions for a homeomorphism to admit the L-shadowing property.

\begin{proposition}\label{3}
If a homeomorphism, defined in a compact metric space, has the L-shadowing property then it has the shadowing property and admits a finite number of chain recurrent classes.
\end{proposition}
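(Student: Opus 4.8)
The plan is to prove the two conclusions separately, exploiting that the L-shadowing hypothesis only demands a shadowing point when the pseudo orbit is \emph{also} a two-sided limit pseudo orbit; the real work is thus to manufacture such pseudo orbits out of the given data.

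\textbf{Shadowing.} Fix $\eps>0$ and let $\delta>0$ be the constant given by the L-shadowing property for $\eps/2$. Given a $\delta$-pseudo orbit $(x_k)_{k\in\Z}$, for each $N\in\N$ I would replace it by the sequence $(y^N_k)_{k\in\Z}$ which coincides with $(x_k)$ on $[-N,N]$ and is continued by genuine orbits outside: $y^N_k=x_k$ for $|k|\le N$, $y^N_k=f^{k-N}(x_N)$ for $k>N$, and $y^N_k=f^{k+N}(x_{-N})$ for $k<-N$. A routine check shows $(y^N_k)_k$ is still a $\delta$-pseudo orbit and that $d(f(y^N_k),y^N_{k+1})=0$ for all $|k|\ge N$, so it is a two-sided limit pseudo orbit. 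L-shadowing then yields $z_N$ with $d(f^k(z_N),x_k)<\eps/2$ for $|k|\le N$. Using compactness of $X$, pass to a subsequence with $z_N\to z$; letting $N\to\infty$ and using continuity of each $f^k$ gives $d(f^k(z),x_k)\le\eps/2<\eps$ for every $k\in\Z$. Hence every $\delta$-pseudo orbit is $\eps$-shadowed.

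\textbf{Finitely many chain recurrent classes.} Suppose, for a contradiction, that there are infinitely many chain recurrent classes. By the first part $f$ has the shadowing property, so $CR(f)=\overline{Per(f)}$ (a classical consequence of shadowing, see \cite{AH}); since the classes are pairwise disjoint closed sets whose union is $CR(f)=\overline{Per(f)}$, infinitely many of them must contain periodic points (otherwise $\overline{Per(f)}$ would be contained in a finite union of classes and thus miss a class). Now fix $\eps>0$, let $\delta>0$ be the L-shadowing constant for $\eps$, pick periodic points in infinitely many distinct classes, and use compactness to extract two of them, $p$ and $q$, lying in different classes, with $d(p,q)<\delta$. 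Consider $(x_k)_{k\in\Z}$ with $x_k=f^{k-1}(p)$ for $k\le 0$ and $x_k=f^{k-1}(q)$ for $k\ge 1$: all consecutive errors vanish except $d(f(x_0),x_1)=d(p,q)<\delta$, so it is a $\delta$-pseudo orbit and a two-sided limit pseudo orbit whose terms accumulate, as $k\to-\infty$, exactly on the orbit of $p$, and, as $k\to+\infty$, exactly on the orbit of $q$. L-shadowing gives $z$ with $d(f^k(z),x_k)\to 0$ as $|k|\to\infty$, so the backward orbit of $z$ accumulates on $p$ and its forward orbit on $q$. Joining a long orbit segment of $z$ to $p$ on the left and to $q$ on the right (using uniform continuity of $f$) shows that for every $\eta>0$ there is an $\eta$-pseudo orbit from $p$ to $q$; running the symmetric construction (exchanging $p$ and $q$) gives $\eta$-pseudo orbits from $q$ to $p$. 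Concatenating the two, for each $\eta>0$, produces a periodic $\eta$-pseudo orbit through both $p$ and $q$, so $p$ and $q$ lie in the same chain recurrent class — contradicting the choice of $p,q$.

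\textbf{Where the difficulty lies.} The delicate part is the second one. Chain recurrence of a point supplies only $\delta$-pseudo orbit \emph{loops} through it, and such loops are not two-sided limit pseudo orbits (their errors do not tend to $0$), so L-shadowing cannot be applied to them directly; this is precisely why one passes through $CR(f)=\overline{Per(f)}$ to obtain genuine periodic points, whose orbits are exact and can serve as the two ``ends'' of the constructed pseudo orbit, with a single $\delta$-sized jump in the middle. One must also be slightly careful that this jump costs $d(p,q)$ rather than $d(f(p),q)$ — hence the asymmetric choice $x_0=f^{-1}(p)$, $x_1=q$ — and that the forward and backward limit sets of the shadowing point $z$ are exactly the two periodic orbits, so that $z$ genuinely chains $p$ to $q$.
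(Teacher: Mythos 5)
Your first part (shadowing) is correct and is essentially the paper's argument: you turn the pseudo orbit into a two-sided limit pseudo orbit by grafting genuine orbits onto a truncation and then apply L-shadowing; the paper does the same for a finite pseudo orbit and quotes the equivalence of finite shadowing with shadowing instead of carrying out your compactness limit by hand.

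The second part has a genuine gap. You invoke ``$CR(f)=\overline{Per(f)}$, a classical consequence of shadowing''. That implication is false for general homeomorphisms with the shadowing property: an odometer (adding machine) on the Cantor set has the shadowing property, is minimal, so every point is chain recurrent, and it has no periodic points at all. In this paper the equality $CR(f)=\overline{Per(f)}$ is proved only under the extra hypothesis of ($n$-)expansiveness (Proposition \ref{2}), which is not available in Proposition \ref{3}, whose hypothesis is L-shadowing alone; and, as you yourself observe, periodic $\delta$-pseudo orbits are not two-sided limit pseudo orbits, so L-shadowing cannot be used to close them up and produce periodic points. Since your entire contradiction argument rests on extracting periodic points from infinitely many distinct chain recurrent classes, it collapses: a class need not contain any periodic point.

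The detour through periodic points is also unnecessary, and removing it is exactly how the paper proceeds. If there were two distinct classes meeting the same ball $B(x_i,\frac{\delta}{2})$ of a finite cover (which must happen if there are more classes than balls), pick $y_1,y_2$ in distinct classes with $d(y_1,y_2)<\delta$ and form the sequence consisting of the genuine past orbit of $y_1$ followed by the genuine future orbit of $y_2$: it has a single jump of size $<\delta$ and vanishing errors at infinity, so it is both a $\delta$-pseudo orbit and a two-sided limit pseudo orbit. The L-shadowing point $z$ satisfies $d(f^{-k_1}(z),f^{-k_1}(y_1))<\eta$ and $d(f^{k_2}(z),f^{k_2}(y_2))<\eta$ for suitable large $k_1,k_2$; using the invariance of chain recurrent classes, $y_1$ and $f^{-k_1}(y_1)$ (respectively $f^{k_2}(y_2)$ and $y_2$) are chain equivalent, so concatenating an $\eta$-chain from $y_1$ to $f^{-k_1}(y_1)$, the exact orbit segment of $z$, and an $\eta$-chain from $f^{k_2}(y_2)$ to $y_2$ gives an $\eta$-chain from $y_1$ to $y_2$ for every $\eta>0$; the symmetric construction gives chains back, contradicting that the classes are distinct. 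This uses only invariance of the classes and arbitrary points, no periodicity, and is the argument of the paper.
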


\begin{proof}
To prove the shadowing property, let $\eps>0$ be given and consider $\delta>0$ such that every $\delta$-pseudo orbit, that is also a two-sided limit pseudo-orbit, is both $\eps$-shadowed and two-sided limit shadowed. It is known that the shadowing property is equivalent to the finite shadowing property (see \cite{CDX} for a proof). Then it is sufficient to prove that any finite $\delta$-pseudo orbit is $\eps$-shadowed. We consider a finite $\delta$-pseudo orbit $(x_k)_{k=0}^l$ and extend it to a $\delta$-pseudo orbit, that is also a two-sided limit pseudo orbit, considering the past orbit of $x_0$ and the future orbit of $x_l$. More precisely, define the sequence $(y_k)_{k\in\Z}$ as follows
$$y_k=\begin{cases} f^k(x_0), & k<0\\
x_k, & 0\leq k\leq l \\
f^k(x_l), & k>l.
\end{cases}$$ Then the L-shadowing property assures the existence of $z\in X$ that both $\eps$-shadows and two-sided limit shadows $(y_k)_{k\in\Z}$. In particular, $z$ $\eps$-shadows the finite pseudo orbit $(x_k)_{k=0}^l$. Since this can be done for every $\eps>0$, the shadowing property is proved. To prove that $f$ admits a finite number of chain recurtrent classes we let $\alpha$ be the diameter of the space and consider $\delta>0$, given by the L-shadowing property, such that  every $\delta$-pseudo orbit, that is also a two-sided limit pseudo-orbit, is both $\alpha$-shadowed and two-sided limit shadowed. Consider a finite cover of $X$ by open sets of the form $B(x_i,\frac{\delta}{2})$, $x_i\in X$ and $i\in\{1,\dots,l\}$. We claim that two different chain recurrent classes cannot intersect the same open set of this cover. Indeed, if $y_1$ and $y_2$ belong to two different chain recurrent classes and also belong to the same open set $B(x_i,\frac{\delta}{2})$ then we can consider a $\delta$-pseudo orbit $(x_k)_{k\in\Z}$, that is also a two-sided limit pseudo orbit, defined as follows: 
$$x_k=\begin{cases}f^k(y_1), & k<0\\
f^k(y_2), & k\geq0. 
\end{cases}$$ The L-shadowing property assures the existence of $z\in X$ that both $\alpha$-shadows and two-sided limit shadows $(x_k)_{k\in\Z}$. For each $\eps>0$ we will construct an $\eps$-pseudo orbit connecting $y_1$ to $y_2$. Since $z$ two-sided limit shadows $(x_k)_{k\in\Z}$, we can choose natural numbers $k_1$ and $k_2$ satisfying $$d(f^{-k_1}(z),f^{-k_1}(y_1))<\eps \,\,\,\,\,\, \,\,\,\,\,\, \textrm{and} \,\,\,\,\,\, \,\,\,\,\,\, d(f^{k_2}(z),f^{k_2}(y_2))<\eps.$$ Chain recurrent classes are invariant, so $y_1$ and $f^{-k_1}(y_1)$ belong to the same chain recurrent class. Then there exists a finite $\eps$-pseudo orbit connecting $y_1$ and $f^{-k_1}(y_1)$. Analogously, $y_2$ and $f^{k_2}(y_2)$ belong to the same chain-recurrent class and there exists an $\eps$-pseudo orbit from $f^{k_2}(y_2)$ to $y_2$. Thus, we can concatenate the pseudo orbit connecting $y_1$ to $f^{-k_1}(y_1)$, with the segment of orbit from $f^{-k_1}(z)$ to $f^{k_2}(z)$ and the pseudo orbit from $f^{k_2}(y_2)$ to $y_2$ to create an $\eps$-pseudo orbit connecting $y_1$ and $y_2$. This can be done for every $\eps>0$. In a similar way, we consider the $\delta$-pseudo orbit, that is also a two-sided limit pseudo orbit, defined as the past orbit of $y_2$ and the future orbit of $y_1$. The L-shadowing property assures this sequence is two--sided limit shadowed and we repeat the argument above to create, for each $\eps>0$, an $\eps$-pseudo orbit connecting $y_2$ to $y_1$. This contradicts the fact that $y_1$ and $y_2$ belong to different chain recurrent classes and proves the claim. The proposition easily follows from this claim, since any open set of the form $B(x_i,\frac{\delta}{2})$ intersects at most one chain-recurrent class.
\end{proof}

Now the L-shadowing property is obtained for topologically hyperbolic homeomorphisms.

\begin{proposition}\label{expansive}
If an expansive homeomorphism, defined in a compact metric space, has the shadowing property, then it has the L-shadowing property.
\end{proposition}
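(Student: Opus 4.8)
The plan is to show that the single point furnished by the ordinary shadowing property already does \emph{both} jobs: it $\eps$-shadows and it two-sided limit shadows. Let $c>0$ be an expansivity constant of $f$. Fix $\eps>0$; since $\eps$-shadowing is implied by $\eps'$-shadowing for any $\eps'\le\eps$, we may assume $\eps<c$. Let $\delta>0$ be given by the shadowing property for this $\eps$, and let $(x_k)_{k\in\Z}$ be a $\delta$-pseudo-orbit that is also a two-sided limit pseudo-orbit. The shadowing property yields $z\in X$ with $d(f^k(z),x_k)<\eps$ for every $k\in\Z$; in particular $z$ $\eps$-shadows $(x_k)_{k\in\Z}$, so all that remains is to prove that this same $z$ two-sided limit shadows $(x_k)_{k\in\Z}$, i.e. that $d(f^k(z),x_k)\to 0$ as $|k|\to\infty$.

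Suppose not. Then there are $\gamma>0$ and integers $(k_j)$ with $|k_j|\to\infty$ and $d(f^{k_j}(z),x_{k_j})\ge\gamma$ for all $j$; after passing to a subsequence we may assume $k_j\to+\infty$ (the case $k_j\to-\infty$ being symmetric, using the other tail of the limit pseudo-orbit). By compactness of $X$, pass to a further subsequence so that $f^{k_j}(z)\to a$ and $x_{k_j}\to b$; then $d(a,b)\ge\gamma>0$, so $a\neq b$. The key claim is that $d(f^k(a),f^k(b))\le\eps<c$ for \emph{every} $k\in\Z$, which by expansiveness forces $a=b$, a contradiction. To see the claim for a fixed $k$, note $f^{k_j+k}(z)=f^k(f^{k_j}(z))\to f^k(a)$ by continuity of $f^k$. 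On the pseudo-orbit side, the point is that $d(x_{k_j+k},f^k(x_{k_j}))\to 0$ as $j\to\infty$: since $k_j\to+\infty$, the at most $|k|$ jumps $d(f(x_i),x_{i+1})$ with $i$ lying between $k_j$ and $k_j+k$ all tend to $0$ by the two-sided limit pseudo-orbit hypothesis, so a routine telescoping estimate using uniform continuity of $f$ on the compact space $X$ gives it; hence $x_{k_j+k}\to f^k(b)$ as well. Therefore $d(f^k(a),f^k(b))=\lim_j d(f^{k_j+k}(z),x_{k_j+k})\le\eps$, since every term is less than $\eps$ by the choice of $z$. This completes the contradiction and hence the proof.

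The step I expect to be the main obstacle — and the only place where expansiveness is genuinely used — is precisely this last passage: ordinary shadowing alone produces a point staying within $\eps$ of the pseudo-orbit with no reason for the error to decay, so one must exploit the rigidity of expansive systems to upgrade an $\eps$-shadowing point into a two-sided limit shadowing point. The technical heart is the convergence $d(x_{k_j+k},f^k(x_{k_j}))\to 0$, which transfers the decay of the pseudo-orbit's defects near $\pm\infty$ into decay of the shadowing error along the chosen subsequence; everything else reduces to compactness and the definition of an expansivity constant. (It is worth remarking that the analogous move fails for $n$-expansive homeomorphisms, since there the limit set argument only controls orbits up to the finitely many points of a dynamical ball, which is exactly why Proposition~\ref{expansive} does not carry over verbatim.)
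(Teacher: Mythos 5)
Your proof is correct, and it takes a genuinely different route from the paper's. You show directly that, once $\eps$ is smaller than the expansivity constant $c$, \emph{any} point that $\eps$-shadows a two-sided limit pseudo-orbit must automatically two-sided limit shadow it: assuming the error does not decay along some subsequence $k_j\to+\infty$ (or $-\infty$), compactness produces limits $a\neq b$ of $f^{k_j}(z)$ and $x_{k_j}$, the decay of the pseudo-orbit's jumps near the tail plus uniform continuity of $f$ (and of $f^{-1}$, which you should invoke for negative $k$, though it is automatic for a homeomorphism of a compact space) gives $x_{k_j+k}\to f^k(b)$ for each fixed $k$, and passing to the limit in $d(f^{k_j+k}(z),x_{k_j+k})<\eps$ yields $d(f^k(a),f^k(b))\le\eps<c$ for all $k\in\Z$, contradicting expansiveness. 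The paper instead imports two external ingredients: Ma\~n\'e's fact that for expansive homeomorphisms there is $\eps>0$ with $W^s_{\eps}(x)\subset W^s(x)$ and $W^u_{\eps}(x)\subset W^u(x)$, and the limit shadowing property of $f$ and $f^{-1}$ (from \cite{C}), producing auxiliary points $p_1,p_2$ that limit shadow the two tails and then showing the $\frac{\eps}{2}$-shadowing point $z$ is asymptotic to them. Your argument is more self-contained and elementary, using only compactness, uniform continuity and the definition of expansiveness, and it isolates cleanly why the statement fails in the $n$-expansive setting: there $d(f^k(a),f^k(b))\le\eps$ for all $k$ no longer forces $a=b$, which matches the paper's remark that its own proof also breaks down because $n(x,\eps)$ may exceed one. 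The paper's route, on the other hand, makes visible the role of the stable/unstable set structure, which is the viewpoint exploited in the rest of the paper.
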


\begin{proof}Consider a number $\eps>0$, given by expansiveness, such that $W^s_{\eps}(x)\subset W^s(x)$ and $W^u_{\eps}(x)\subset W^u(x)$ for every $x\in X$. Let $\delta>0$, given by the shadowing property, be such that every $\delta$-pseudo orbit is $\frac{\eps}{2}$-shadowed, and let $(x_k)_{k\in\Z}$ be a $\delta$-pseudo orbit that is also a two-sided limit pseudo-orbit. It is known that $f$ and $f^{-1}$ have the limit shadowing property, so there exist $p_1,p_2\in X$ that limit shadows $(x_k)_{k\in\Z}$ in the past and in the future, respectively. Let $k_0\in\N$ be such that $d(f^k(p_1),x_k)<\frac{\eps}{2}$ for every $k\leq -k_0$ and $d(f^k(p_2),x_k)<\frac{\eps}{2}$ for every $k\geq k_0$. Consider a point $z\in X$ that $\frac{\eps}{2}$-shadows $(x_k)_{k\in\Z}$ and note that if $k\leq -k_0$ then
\begin{eqnarray*}
d(f^k(z),f^k(p_1))&\leq&d(f^k(z),x_k)+d(x_k,f^k(p_1)\\
&<&\frac{\eps}{2}+\frac{\eps}{2}\\
&=&\eps,
\end{eqnarray*}
and if $k\geq k_0$ then
\begin{eqnarray*}
d(f^k(z),f^k(p_2))&\leq&d(f^k(z),x_k)+d(x_k,f^k(p_2)\\
&<&\frac{\eps}{2}+\frac{\eps}{2}\\
&=&\eps.
\end{eqnarray*}
Hence, $f^{-k_0}(p_1)\in W^u_{\eps}(f^{-k_0}(z))$ and $f^{k_0}(p_2)\in W^s_{\eps}(f^{k_0}(z))$. The choice of $\eps$ assures that $$d(f^k(p_1),f^k(z))\to0, \,\,\,\,\,\, k\to-\infty,$$ and that $$d(f^k(p_2),f^k(z))\to0, \,\,\,\,\,\, k\to\infty.$$ Since $p_1$ limit shadows $(x_k)_{k\in\Z}$ in the past and $p_2$ limit shadows $(x_k)_{k\in\Z}$ in the future, we obtain that $z$ two-sided limit shadows $(x_k)_{k\in\Z}$.
\end{proof}

\vspace{+0.4cm}

In this proof, the existence of the number $\eps>0$ satisfying $W^s_{\eps}(x)\subset W^s(x)$ and $W^u_{\eps}(x)\subset W^u(x)$ for every $x\in X$ is essential. In the $n$-expansive scenario, the number $n(f^{k_0}(z),\eps)$ could be greater than one and the points $f^{k_0}(z)$ and $f^{k_0}(p_2)$ could belong to different stable sets. In this case, $z$ would not limit shadow $(x_k)_{k\in\N}$. Now we prove Theorem \ref{h}, where the L-shadowing property is proved to be equivalent to the space being finite, for positively $n$-expansive homeomorphisms.

\vspace{+0.4cm}

\hspace{-0.45cm}\emph{Proof of Theorem \ref{h}} : We will use the L-shadowing property to obtain a finite set $A\subset X$ such that $$X=\bigcup_{x\in A}W^u(x).$$ Since distinct periodic points have distincts unstable sets, this would prove that the set of periodic points of $f$ is finite. Since $f$ is positively $n$-expansive and has the shadowing property, Proposition \ref{2} would assure that $\Omega(f)$ is finite and Lemma \ref{Lema} assures this is equivalent to $X$ being finite.


We claim that there exists $0<\delta<\frac{c}{2}$ such that for each $x\in X$ there exists a subset $A(x)\subset B(x,\delta)$ with at most $n$ different points such that points in $A(x)$ belong to different unstable sets and also any point in $B(x,\delta)$ belongs to the unstable set of some point in $A(x)$. Let $c>0$ be the positively $n$-expansive constant of $f$ and $\delta>0$, given by the L-shadowing property, be such that every $\delta$-pseudo orbit, that is also a two-sided limit pseudo-orbit, is both $\frac{c}{2}$-shadowed and two-sided limit shadowed. If for some $x\in X$ there is not such set $A(x)$ in $B(x,\delta)$, then there exists a subset $C\subset B(x,\delta)$ with at least $n+1$ different points such that points in $C$ belong to different unstable sets. For each $y\in C$ we consider the sequence $\{x_k\}_{k\in\mathbb{Z}}\subset X$ defined by $$x_k=\begin{cases}
       f^k(y), & k<0 \\
       f^k(x), & k\geq0.
     \end{cases}$$
This sequence is a $\delta$-pseudo orbit, that is also a two-sided limit pseudo orbit, then the L-shadowing property assures the existence of $z(y)$ that both $\frac{c}{2}$-shadows and two-sided limit shadows $(x_k)_{k\in\Z}$. In particular, $$z(y)\in W^u(y)\cap W^u_{\frac{c}{2}}(y)\cap W^s_{\frac{c}{2}}(x)\cap  W^s(x).$$ Distincts points in $B$ have distinct unstable sets, then, $y,w\in B$ and $y\neq w$ imples $z(y)\neq z(w)$. Thus, the set $\{z(y); \,\, y\in B\}$ has at least $n+1$ points and is contained in $W^s_{c}(x)$, contradicting the fact that $f$ is positively $n$-expansive. This proves the claim. To finish the proof of the theorem, we choose a finite cover of $X$ by open sets of the form $B(x_i,\delta)$ with $x_i\in X$ and $i\in\{1,\dots,l\}$ and consider the sets $A(x_i)\subset B(x_i,\delta)$ given by the previous claim. It is easy to note that the union $$A=\bigcup_{i=1}^lA(x_i)$$ satisfies $$X=\bigcup_{x\in A}W^u(x).$$  As observed above, this proves that $X$ is finite.\qed

\vspace{+0.7cm}

\hspace{-0.45cm}\emph{Proof of Theorem \ref{D}} : Let $c>0$ be the $n$-expansivity constant of $f$ and let $\delta>0$, given by the L-shadowing property, be such that every $\delta$-pseudo orbit, that is also a two-sided limit pseudo-orbit, is both $\frac{c}{2}$-shadowed and two-sided limit shadowed. Finally, let $\eps=\frac{1}{2}\min\{c,\delta\}$ and consider the sets $E(x,\eps)$ and $\bar{E}(x,\eps)$ as in the definition of $n(x,\eps)$ and $\bar{n}(x,\eps)$, respectively. For any pair $(q,p)\in\bar{E}(x,\eps)\times E(x,\eps)$, the sequence $(x_k)_{k\in\Z}$ defined by $$x_k=\begin{cases}
       f^k(q), & k<0 \\
       f^k(p), & k\geq0\end{cases}$$ is both a $\delta$-pseudo orbit and a two-sided limit pseudo orbit. Thus, there exists $z\in X$, given by the L-shadowing property, that $\frac{c}{2}$-shadows and also two-sided limit shadows $(x_k)_{k\in\Z}$. This defines a map $z:\bar{E}(x,\eps)\times E(x,\eps)\to X$ such that $$z(q,p)\in W^u_{\frac{c}{2}}(q)\cap W^s_{\frac{c}{2}}(p)\cap W^u(q)\cap W^s(p).$$ This map $z$ is injective, since $(q,p)\neq(r,s)$ implies $z(q,p)$ and $z(r,s)$ belong to different stable (or unstable) sets, so $z(q,p)\neq z(r,s)$. Let $Rg(z)$ be the range of the map $z$. Since $Rg(z)\subset W^u_c(x)\cap W^s_c(x)$ and $\#Rg(z)=n(x,\eps).\bar{n}(x,\eps)$, the desired inequality follows from the fact that $c$ is an $n$-expansivity constant of $f$. 
			
If we assume, in addition, that $f$ is transitive, a similar argument obtains the desired inequality for any pair of points $(x,y)\in X\times X$. In this case, $\delta>0$ will be such that every $\delta$-pseudo orbit, that is also a two-sided limit pseudo-orbit, is both $\frac{c}{3}$-shadowed and two-sided limit shadowed. If $\eps=\frac{1}{3}\min\{c,\delta\}$ and $w\in X$ satisfies $$d(w,\bar{E}(x,\eps))<\delta \,\,\,\,\,\, \textrm{and} \,\,\,\,\,\,d(f^{m}(w),E(y,\eps))<\delta$$ for some $m>0$, then for any pair $(q,p)\in\bar{E}(x,\eps)\times E(y,\eps)$ the sequence $(x_k)_{k\in\Z}$ defined by $$x_k=\begin{cases}
       f^k(q), & k<0 \\
       f^k(w), & 0\leq k<m \\ 
			 f^{k-m}(p), & k\geq m\end{cases}$$ is both a $\delta$-pseudo orbit and a two-sided limit pseudo orbit. The shadowing point $z(q,p)\in X$, given by the L-shadowing property, satisfies $$z(q,p)\in W^u_{\frac{c}{3}}(q)\cap W^u(q) \,\,\, \text{ and } \,\,\, f^{m}(z(q,p))\in W^s_{\frac{c}{3}}(p)\cap W^s(p).$$ The map $z$ is injective, since $(q,p)\neq(r,s)$ implies $z(q,p)$ and $z(r,s)$ belong to different unstable sets, or $f^m(z(q,p))$ and $f^m(z(r,s))$ belong to different stable sets. Also, points in $Rg(z)$ belong to the same dynamical ball of radius $c$, since they all $\frac{c}{3}$-shadow pseudo orbits that are $\frac{c}{3}$-close. As before, the desired inequality follows from the fact that $c$ is an $n$-expansivity constant of $f$. \qed




\section{Acknowledgments}

Part of this work was developed while the first author was working at UFV-Brazil and the second author was visiting this institutions' mathematics department. The second author was supported by CNPq(Brazil).

\vspace{1cm}
\noindent

{\em B. Carvalho}
\vspace{0.2cm}

\noindent

Departamento de Matem\'atica,

Universidade Federal de Minas Gerais - UFMG

Av. Ant\^onio Carlos, 6627 - Campus Pampulha

Caixa Postal: 702

31270-901, Belo Horizonte - MG, Brazil.
\vspace{0.2cm}

\email{bmcarvalho@mat.ufmg.br}

\vspace{1.5cm}
\noindent

{\em W. Cordeiro}

\noindent

IMPA

Estrada Dona Castorina 110

Rio de Janeiro / Brasil

22460-320

\vspace{0.2cm}

\email{welingtonscordeiro@gmail.com}

\end{document}